\documentclass[letterpaper, conference]{ieeeconf}
\usepackage[utf8]{inputenc}

\IEEEoverridecommandlockouts                              % This command is only
% needed if you want to
% use the \thanks command
\overrideIEEEmargins

% \usepackage[left=1.0in, right=1.0in, top=1.0in, bottom=1.0in]{geometry}
% \usepackage{layout}
% See the \addtolength command later in the file to balance the column lengths
% on the last page of the document
\pdfminorversion=4
\usepackage[utf8]{inputenc}

\usepackage{amsmath,amssymb}

\usepackage{amsthm}
\usepackage{dutchcal}
\usepackage{color}
\usepackage{todonotes}
\usepackage{bm}
\usepackage{soul}
\usepackage[ruled,vlined]{algorithm2e}
\usepackage{pgfgantt}
\usepackage{graphicx}
\usepackage{xcolor}
\usepackage{datetime}
\usepackage{standalone}
\newtheorem{Theorem}{\bf Theorem}

\newtheorem{Assumption}[Theorem]{\bf Assumption}
\newtheorem{Algorithm}[Theorem]{\bf Algorithm}

\newdateformat{monthdayyeardate}{\monthname[\THEMONTH]~\THEDAY, \THEYEAR}
\newdateformat{monthdayyeardatee}{\THEDAY.\THEMONTH.\THEYEAR}

%%%%%%%%%%%%%%%%%%%%%%%%%%%%%%%%%%%%%%%%%%%%%%%%%%%%%%%%%%%%%%%%%%%%%
\author{Francesco Micheli, John Lygeros% }
\thanks{Research supported by the European Research Council under the H2020 Advanced Grant no. 787845 (OCAL).}
\thanks{The authors are with the Automatic Control Laboratory, Swiss Federal Institute of Technology (ETH) Z\"{u}rich, 8092 Z\"{u}rich, Switzerland. Emails: \texttt{\{frmicheli, jlygeros\}@ethz.ch}.}%
}
\title{Scenario-based Stochastic MPC for systems with uncertain dynamics}
%%%%%%%%%%%%%%%%%%%%%%%%%%%%%%%%%%%%%%%%%%%%%%%%%%%%%%%%%%%%%%%%%%%%%

\begin{document}
\maketitle
%%%%%%%%%%%%%%%%%%%%%%%%%%%%%%%%%%%%%%%%%%%%%%%%%%%%%%%%%%%%%%%%%%%%
\begin{abstract}
Model Predictive Control is an extremely effective control method for systems with input and state constraints. Model Predictive Control performance heavily depends on the accuracy of the open-loop prediction. For systems with uncertainty this in turn depends on the information that is available about the properties of the model and disturbance uncertainties. Here we are interested in situations where such information is only available through realizations of the system trajectories. We propose a general scenario-based optimization framework for stochastic control of a linear system affected by additive disturbance, when the dynamics are only approximately known. The main contribution is in the derivation of an upper bound on the number of scenarios required to provide probabilistic guarantees on the quality of the solution to the deterministic scenario-based finite horizon optimal control problem. We provide a theoretical analysis of the sample complexity of the proposed method and demonstrate its performance on a simple simulation example. Since the proposed approach leverages sampling, it does not rely on the explicit knowledge of the model or disturbance distributions, making it applicable in a wide variety of contexts.
\end{abstract}
%%%%%%%%%%%%%%%%%%%%%%%%%%%%%%%%%%%%%%%%%%%%%%%%%%%%%%%%%%%%%%%%%%%%
\section{Introduction}
Model Predictive Control (MPC) has been effectively used to control complex dynamical systems in presence of input and state constraints, however, its performance is strongly affected by the accuracy of the prediction model. This aspect is critical when the system is too complex or too expensive to be accurately modelled and identification or learning algorithms are employed to obtain approximate plant dynamics.
In the last decades, growing attention has been devoted to stochastic and robust MPC methods that can guarantee stability and constraint satisfaction in the presence of model uncertainties and exogenous disturbances.

Robust MPC (RMPC) classically considers min-max formulations that optimize the control actions with respect to the worst-case model uncertainty or disturbance realizations \cite{wan2003efficient,kouvaritakis2000efficient,mayne2000constrained}. These approaches rely on the assumption of bounded uncertainties and on the convexity of the optimization problem with respect to both control variables and uncertainties. The constraints need to be satisfied for \textit{all} possible values of the uncertainties, an approach that can be conservative if additional information about the uncertainty and its distribution is available (for example, through samples). Furthermore, the computational burden of RMPC methods does not scale favorably with the model and disturbance complexities, sometimes requiring ad hoc simplifications of the constraints.
%\cite{wan2003efficient}

When an appropriate probabilistic description of the model uncertainties is available, Stochastic MPC (SMPC) \cite{cannon2010stochastic,cinquemani2009convex} can be used to enforce state and input limitations in the form of chance constraints. Admitting a small level of constraint violation allows one to reduce the conservatism with respect to the RMPC solution, especially in the presence of rare events. Similarly to RMPC, computational tractability typically requires approximations or assumptions on the probability distributions.

An alternative approach is provided by the class of randomized uncertainty methods \cite{alamo2010sample,vidyasagar2001randomized,calafiore2006probabilistic,shapiro2005complexity,calafiore2011research}. These approaches rely on sampling to obtain a deterministic reformulation of the chance constraint problem, without assumptions on the probability distribution of the uncertainties. A popular method is the so called \textit{scenario approach} that, in its basic formulation \cite{calafiore2006scenario,campi2008exact,calafiore2010random}, relies on the convexity of the optimization problem with respect to the decision variables to provide tight \textit{a-priori} bounds on the number of samples required to guarantee the original chance constraint satisfaction. The idea of applying the scenario approach in a MPC framework has been explored in, among others, ~\cite{prandini2012randomized} for known linear time invariant deterministic models affected by additive disturbance, \cite{calafiore2012robust} for parametric linear time invariant stochastic dynamics with general dependencies of the system on the parameter realization and \cite{calafiore2013stochastic} for linear parameter varying dynamics. In~\cite{margellos2014road} a robust optimization problem is proposed for which the uncertainty bounds are computed via the scenario approach.

Building on these ideas, we propose a general, but tractable, scenario-based optimization framework for chance constrained stochastic MPC with linear uncertain dynamics. Unlike earlier approaches, the proposed work robustifies, in a probabilistic sense, the stochastic MPC against the model uncertainty related to the model dynamics. For uncertain linear time invariant systems, we provide a probabilistic upper bound on the number of samples required to achieved a prescribed probability of violation, leading to finite sample randomized algorithms to solve the stochastic control problem. Since the proposed approach leverages sampling, it does not rely on the explicit knowledge of the model and disturbance distributions and it does not require any specific assumption on the uncertainties, making it compatible with a wide variety of probabilistic identification algorithms.

The rest of the paper is organized as follows. The problem formulation is described in Section~\ref{Sec:Problem_Formulation}. Section~\ref{Sec:Background_Scenario} reviews earlier results in scenario MPC. In Section~\ref{Sec:Uncertain_dynamics_Scenario_Stochastic_MPC} we derive an upper bound on the number of scenarios needed to achieve a prescribed violation probability level for the case of scenario MPC with uncertain dynamics. Section~\ref{Sec:Numerics} provides a numerical example. Section~\ref{Sec:Conclusion} concludes the paper. 
%%%%%%%%%%%%%%%%%%%%%%%%%%%%%%%%%%%%%%%%%%%%%%%%%%%%%%%%%%%%%%%%%%%%
\section{Problem formulation}\label{Sec:Problem_Formulation}
Consider the discrete-time linear time invariant (LTI) system subject to additive disturbances
\begin{equation}\label{eq:Dynamics}
	x_{k+1}=\bar{A} x_{k} + \bar{B} u_{k} +\eta_k,
\end{equation}
with state $x_k \in \mathbb{R}^{n}$, control input $u_k \in \mathbb{R}^{m}$ and disturbance $\eta_k \in \mathbb{R}^{n}$ distributed according to a (possibly unknown) probability measure $\mathbb{P}_{\eta}$ over the (possibly unknown and unbounded) support set $\mathcal{H} \subseteq \mathbb{R}^{n}$.
We assume that the state is measurable, but that we do not have access to the true system dynamics. This is relevant in many engineering contexts where only an approximate model is available for the control task.
Here we summarize the knowledge we have about the system dynamics by considering an arbitrary parametrization of the system matrices $A = A(\vartheta)$ and $B = B(\vartheta)$ and by assuming a parameters distribution $\vartheta \sim \mathbb{P}_{\vartheta}$ defined over the (possibly unknown and unbounded) support set ${\Theta}$. We further assume that the true system dynamics are within this parametrized class, i.e., there exists $\bar{\vartheta} \in \Theta$ such that $ \bar{A}=A(\bar{\vartheta})$ and $\bar{B}=B(\bar{\vartheta})$. As it will be evident in the next sections, our approach does not require explicit knowledge of the parameter distribution $\mathbb{P}_{\vartheta}$ as long as i.i.d. samples from the distribution are available. An example where the parameter distribution is not explicitly known is provided in Section~\ref{Sec:Numerics}.

When only an approximate model of the dynamics is available, two sources of uncertainty can be distinguished:
\begin{itemize}
	\item Epistemic uncertainty on the system dynamics (i.e. the value of $\vartheta$ for the true system matrices are unknown);
	\item Stochastic uncertainty due to the presence of the disturbance $\eta_k$. 
\end{itemize}
The epistemic uncertainty is often treated in the literature by selecting an estimate using regression, then using this estimate in a problem formulation that addresses the stochastic uncertainty. We argue, however, this can be detrimental for performance whenever there is a significant mismatch between the model corresponding to the parameter estimate and the true system dynamics.
We aim, instead, for a problem formulation that can provide state and input constraint satisfaction with respect to both the epistemic and the stochastic uncertainty.

Our objective is to design a receding horizon predictive controller that minimizes a given cost function while satisfying probabilistic state and input constraints over a finite horizon $T$. To separately account for model uncertainty and for the effect of disturbances, at each time $\tau$ we would like to solve the finite horizon optimal control (FHOC) problem:
\begin{equation}\label{eq:MPC_problem_1}
	\begin{aligned}
		\min_{\boldsymbol{x}_+,\boldsymbol{u},\boldsymbol{\mu}} \quad & \mathbb{E}_{\vartheta,\eta}\left[ J\left( \boldsymbol{x}_{+}, \boldsymbol{u} \right) \right]\\
		\text{s.t.}\quad& x_{\tau+k+1}=A x_{\tau+k} + B u_{\tau+k} +\eta_{\tau+k}\\
		\quad & \boldsymbol{u}=\boldsymbol{\mu}\left( \boldsymbol{x}_{+} \right)\qquad\qquad\qquad k=0,\dots,T\!-\!1\\
		\quad & x_{\tau}=\bar{x}_{\tau}\\
		\quad & \mathbb{P}_{\vartheta}\left(\mathbb{P}_{\eta|\vartheta}\left(f\left( \boldsymbol{x}_+,\boldsymbol{u}\right)\leq 0 \right)\geq 1-\varepsilon_1\right)\geq 1-\varepsilon_2\\
	\end{aligned}
\end{equation}
where $\bar{x}_{\tau}$ is the initial condition at time $\tau$, %$\boldsymbol{x}_{ref}$ is the reference trajectory,
\begin{equation*}
	\boldsymbol{x}_{+}=\begin{bmatrix}
		x_{\tau+1} \\
		x_{\tau+2} \\
		\vdots \\
		x_{\tau\!+\!T}
	\end{bmatrix}, \ \boldsymbol{u}=\begin{bmatrix}
		u_{\tau} \\
		u_{\tau+1} \\
		\vdots \\
		u_{\tau\!+\!T\!-\!1}
	\end{bmatrix}, \ \boldsymbol{\eta}=\begin{bmatrix}
		\eta_{\tau} \\
		\eta_{\tau+1} \\
		\vdots \\
		\eta_{\tau\!+\!T\!-\!1}
	\end{bmatrix},
\end{equation*}
$J\left( \boldsymbol{x}_{+}, \boldsymbol{u} \right)$ is a cost function and $\mu(\cdot):\mathbb{R}^{nT}\to\mathbb{R}^{mT}$ takes values in a class of causal policies ($\mu_{\tau+k}(\boldsymbol{x}_+)$ independent of $x_{\tau+k},\dots,x_{\tau+T}$).
The probabilistic constraint function $f\left(\boldsymbol{x}_+,\boldsymbol{u}\right):\mathbb{R}^{(n+m)T}\to\mathbb{R}$ can encode constraints that need to be satisfied with probabilities of violation $\varepsilon_1 \in [0,1]$ and $\varepsilon_2 \in [0,1]$. These can be either application specific requirements or design parameters that trade off performance for constraint satisfaction. Unlike robust constraint satisfaction, chance constraints can allow some violations to obtain a larger feasible set and a lower cost. This is a reasonable assumption in many applications where, by specification, limited constraint violations are allowed.

Solving~\eqref{eq:MPC_problem_1} over arbitrary control policies is, in general, an intractable problem. To maintain tractability, one can restrict attention to open-loop policies $u_{\tau},\dots,u_{\tau+T-1}$ that are trivially causal; this, however, can be too conservative, as it does not allow the input to exploit the information about the future disturbance that will be available when the decision is executed. A popular alternative is to consider causal state-affine policies that can be reformulated as a disturbance-affine policy~\cite{goulart2006optimization} to maintain computational tractability. Given the assumption of fully measurable state, the disturbance $\eta_k$ at each time step can be reconstructed according to
\begin{equation}\label{eq:dynamics_flip}
\eta_k = x_{k+1} - \left( A x_{k} + B u_{k} \right).
\end{equation}
The resulting disturbance-affine policy can be written as
\begin{equation}\label{eq:AffinePolicy}
    \left\{\begin{aligned}
        u_{\tau}&= \gamma_{\tau}\\
        u_{\tau + k}&= \boldsymbol{\gamma}_{\tau + k}+\sum_{j=0}^{k-1} \lambda_{k, j}  \eta_{\tau+j}\ ,\quad\ k=1,\dots,T\!-\!1\\
    \end{aligned}\right.
\end{equation}
where $\boldsymbol{\gamma}_{k} \in \mathbb{R}^{m}$ and $\lambda_{k} \in \mathbb{R}^{m\times n}$.
Note that this class of policies is by construction causal, as it enforces the requirement that the control input at time $\tau+k$ is only affected by the reconstructed disturbances up to time $\tau+k-1$.
To obtain a more compact reformulation of the MPC Problem~\eqref{eq:MPC_problem_1}, we can write the system dynamics in matrix form
\begin{equation*}
    \begin{array}{l}
    \boldsymbol{x}_{+}=\mathbf{F} x_{\tau}+\mathbf{G} \boldsymbol{u}+\mathbf{H} \boldsymbol{\eta}\ ,
    \end{array}
\end{equation*}
where the $\vartheta$-dependent matrices $\mathbf{F}$, $\mathbf{G}$ and $\mathbf{H}$ are given by:
\begin{equation*}
%\begin{aligned}
\mathbf{F}=
    \begin{bmatrix}
        A \\
        A^{2} \\
        \vdots \\
        A^{T}
    \end{bmatrix}, \ 
 \mathbf{G}=
    \begin{bmatrix}
        B & 0_{n \times m} & \cdots & 0_{n \times m} \\
        A B & B & \ddots & \vdots \\
        \vdots & \ddots & \ddots & 0_{n \times m} \\
        A^{T-1} B & \cdots & A B & B
    \end{bmatrix},
\end{equation*}
\begin{equation*}
\mathbf{H}=
    \begin{bmatrix}
        I_{n \times n} & 0_{n \times n} & \cdots & 0_{n \times n} \\
        A & I_{n \times n} & \ddots & \vdots \\
        \vdots & \ddots & \ddots & 0_{n \times n} \\
        A^{T-1} & \cdots & A & I_{n \times n}
    \end{bmatrix}.
%\end{aligned}
\end{equation*}
Likewise, the input can be parametrized in matrix form as
\begin{equation*}
    \begin{array}{l}
		\boldsymbol{u}=\Gamma+\Lambda \boldsymbol{\eta}\ ,
	\end{array}
\end{equation*}
with
\vspace{-0.1cm}
\begin{equation*}
\Gamma \!=\! \begin{bmatrix}\!
    \gamma_{0} \\
    \gamma_{1} \\
    \vdots \\
    \gamma_{T\!-\!1}
\!\end{bmatrix}\!\!,
\Lambda \!=\!
\begin{bmatrix}\!
    0_{m \times n} & 0_{m \times n} & \cdots & 0_{m \times n} \\
    \lambda_{1,0} & 0_{m \times n} & \ddots & \vdots \\
    \vdots & \ddots & \ddots & 0_{m \times n} \\
    \lambda_{T\!-\!1,0} & \cdots & \lambda_{T\!-\!1, T\!-\!2} & 0_{m \times n}
\!\end{bmatrix}.
\end{equation*}
This results in the FHOC
\begin{equation}\label{eq:MPC_problem_Matrix_form_short}
    \begin{aligned}
        \min_{\Gamma,\Lambda} \quad & 
        \mathbb{E}_{\vartheta,\eta}\left[\tilde{J}\left(\Gamma,\Lambda,\vartheta,\boldsymbol{\eta}\right)\right]\\
        \text{s.t.}\quad & \mathbb{P}_{\vartheta}\left(\mathbb{P}_{\eta|\vartheta}\left(\tilde{f}\left(\Gamma,\Lambda,\vartheta,\boldsymbol{\eta}\right) \leq 0\right) \geq 1-\varepsilon_1\right)\geq 1-\varepsilon_2\\
    \end{aligned}
\end{equation}
where
	\begin{align*}
		\tilde{J}\left(\Gamma,\Lambda,\vartheta,\boldsymbol{\eta}\right) &= J\left(\mathbf{F} x_{\tau}\!+\!\mathbf{G} \Gamma\!+\!\left(\mathbf{H}\!+\!\mathbf{G} \Lambda\right)\!\boldsymbol{\eta},\, \Gamma\!+\!\Lambda \boldsymbol{\eta}\right)\ ,\\
		\tilde{f}\left(\Gamma,\Lambda,\vartheta,\boldsymbol{\eta}\right) &= f\left(\mathbf{F} x_{\tau}\!+\!\mathbf{G} \Gamma\!+\!\left(\mathbf{H}\!+\!\mathbf{G} \Lambda\right)\!\boldsymbol{\eta},\, \Gamma\!+\!\Lambda \boldsymbol{\eta}\right)\ .
	\end{align*}
The solution of the stochastic FHOC Problem~\eqref{eq:MPC_problem_Matrix_form_short} still poses some major challenges. Formulating the chance constraint in a tractable way is only possible under specific assumptions on the disturbance distributions. %In particular, the uncertainty propagation along the prediction horizon is complicated by the This issue is exacerbated when the dynamics is only approximately known, which introduces an extra source of uncertainty that complicates the uncertainty propagation along the prediction horizon.
We address these difficulties in the next sections, starting with earlier results on scenario MPC, under the following assumption:
\begin{Assumption}[Convexity]\label{Assumption:Convexity} The cost function $\tilde{J}\left(\Gamma,\Lambda,\vartheta,\boldsymbol{\eta}\right)$ and the constraint function $\tilde{f}\left(\Gamma,\Lambda,\vartheta,\boldsymbol{\eta}\right)$ are convex in the optimization variables $\Gamma$ and $\Lambda$ for almost every $(\vartheta,\boldsymbol{\eta}) \in {\Theta}\times\mathcal{H}^{T}$.\end{Assumption}
Note that, since $(\Gamma,\Lambda)$ parameterize the state and input trajectories $(\boldsymbol{x}_+, \boldsymbol{u})$ through an affine transformation, it is sufficient to require the cost and constraint function to be convex in $(\boldsymbol{x}_+, \boldsymbol{u})$ to satisfy Assumption~\ref{Assumption:Convexity}.
%
%In , we will provide randomized approaches to solve the stochastic control problem~\eqref{eq:MPC_problem_Matrix_form_short} by first reviewing earlier results on scenario MPC for the case of perfectly known dynamics and then by extending this framework to the case in which the dynamics is uncertain.
%
\section{Background: Scenario MPC with known dynamics}\label{Sec:Background_Scenario}
We consider first the case where the true parameters $\bar{\vartheta}$, i.e. the system matrices $\bar{A}=A(\bar{\vartheta})$ and $\bar{B}=B(\bar{\vartheta})$, are known and discuss how to solve the FHOC Problem~\eqref{eq:MPC_problem_Matrix_form_short} using the scenario approach following~\cite{prandini2012randomized}. 
For a fixed, known, $\vartheta = \bar{\vartheta}$ the FHOC Problem~\eqref{eq:MPC_problem_Matrix_form_short} becomes
\begin{equation}\label{eq:MPC_problem_Matrix_form_known_theta}
	\begin{aligned}
		\min_{\Gamma,\Lambda} \quad & 
		\mathbb{E}_{\eta}\left[\tilde{J}\left(\Gamma,\Lambda,\bar{\vartheta},\boldsymbol{\eta}\right)\right]\\
		\text{s.t.}\quad & \mathbb{P}_{\eta}\left(\tilde{f}\left(\Gamma,\Lambda,\bar{\vartheta},\boldsymbol{\eta}\right) \leq 0\right) \geq 1-\varepsilon_1\\
	\end{aligned}
\end{equation}
The scenario approach~\cite{calafiore2006scenario,campi2008exact, calafiore2010random} is a framework for the solution of general chance-constrained optimization problems. It is a randomized approach based on the solution of a deterministic sampled version of the original optimization problem, called the \textit{Scenario Program} (SP). The scenario approach requires $N$ i.i.d. scenarios, i.e., $N$ samples of the disturbance ${\eta}_k^i\sim \mathbb{P}_{\eta}$, $i=1,\dots,N$, $k=1,\dots,T$.
%The scenario approach requires $N$ scenarios, i.e., $N$ samples of the disturbance vector $\boldsymbol{\eta}^i\sim \mathbb{P}_{\eta^{T}}$, $i=1,\dots,N$.
If the distribution $\mathbb{P}_{\eta}$ is available, scenarios can be obtained by directly sampling from it. Alternatively, if historical data on state-input trajectories is available, scenarios for $\boldsymbol{\eta}$ can be reconstructed from it using~\eqref{eq:dynamics_flip}.
%If the distribution $\mathbb{P}_{\eta^{T}}$ is available, scenarios can be obtained by directly sampling from it. Alternatively, if historical data on state-input trajectories is available, scenarios for $\boldsymbol{\eta}$ can be reconstructed from it using~\eqref{eq:dynamics_flip}.
Using the scenarios, we can formulate the following deterministic SP:
\begin{equation}\label{eq:SP_AB_given}
    \begin{aligned}
        \min_{\Gamma,\Lambda} \qquad & \frac{1}{N}\sum_i^{N} \ \tilde{J}\left(\Gamma,\Lambda,\bar{\vartheta},\boldsymbol{\eta}^i \right)\\
        \text{s.t.}\qquad & \tilde{f}\left(\Gamma,\Lambda,\bar{\vartheta},\boldsymbol{\eta}^i\right) \leq 0\quad \text{for}\ i=1,\dots,N.\\
    \end{aligned}
\end{equation}
The expectation in the cost function and the probabilistic constraint have been replaced with their sampled counterparts. The superscript $i$ in $\boldsymbol{\eta}^i$ has been added to highlight the dependency of both functions on the $i^{th}$ scenario realization.

%The scenario approach relies on the assumption of convexity of the optimization problem to derive tight bounds on the required number of scenarios such that the optimal solution to the SP~\eqref{eq:SP_AB_given} guarantees, with high confidence, a probability of violation smaller than a prescribed probability of violation $\varepsilon \in [0,1] $. Differently from other chance constrained optimization approaches, that usually require convexity of the constraints also with respect to the stochastic parameters or rely on convex over-approximations of the chance constraints, the scenario approach allows for arbitrary distributions of the stochastic parameters.
%Therefore, for the application of the scenario approach, we require the following assumption:
The scenario approach provides probabilistic guarantees on the feasibility of the optimizer of~\eqref{eq:SP_AB_given} under convexity Assumption~\ref{Assumption:Convexity}. Note that we are focusing on providing feasibility guarantees for~\eqref{eq:SP_AB_given}, but it is also possible to obtain guarantees on the worst-case cost by means of an epigraph formulation of the objective function.
We are now in the position to state the following:
\begin{Theorem}\label{Thm:Scenario_MPC_Basic}
\textit{\cite{prandini2012randomized}}
Fix the probability of violation $\varepsilon_1\in[0,1]$, the confidence parameter $\beta\in[0,1]$ and let $d$ be the number of decision variables, i.e., the number of variables in $\Gamma$ and $\Lambda$. Further assume that the SP~\eqref{eq:SP_AB_given} is always feasible and its optimal solution is unique. If $N$ is chosen such that
\begin{equation}\label{eq:N1}
	\sum_{i=0}^{d-1} \begin{pmatrix} N\\ i\end{pmatrix} \varepsilon_1^i (1-\varepsilon_1)^{N-i} \leq \beta\ ,
\end{equation}
then, with probability at least $1-\beta$, the optimal solution to the sampled deterministic SP~\eqref{eq:SP_AB_given} is a feasible solution for the original chance constrained Problem~\eqref{eq:MPC_problem_Matrix_form_short}.
\end{Theorem}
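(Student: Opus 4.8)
The statement is the classical \emph{scenario optimization} bound of~\cite{calafiore2006scenario,campi2008exact} specialised to the SP~\eqref{eq:SP_AB_given}, so my plan is to reproduce that argument. Since the dynamics here are fixed at $\vartheta=\bar\vartheta$, $\mathbb{P}_{\vartheta}$ is a point mass and the outer chance constraint in~\eqref{eq:MPC_problem_Matrix_form_short} collapses to the single chance constraint of~\eqref{eq:MPC_problem_Matrix_form_known_theta}; it therefore suffices to control, for a candidate pair $(\Gamma,\Lambda)$, its \emph{violation probability}
\[
V(\Gamma,\Lambda)\;:=\;\mathbb{P}_{\eta}\!\left(\tilde f\!\left(\Gamma,\Lambda,\bar\vartheta,\boldsymbol\eta\right) > 0\right),
\]
feasibility being exactly the event $V(\Gamma,\Lambda)\le\varepsilon_1$. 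Writing $(\Gamma^\star_N,\Lambda^\star_N)$ for the (assumed unique) optimizer of~\eqref{eq:SP_AB_given} obtained from the i.i.d. scenarios $\boldsymbol\eta^1,\dots,\boldsymbol\eta^N$, the goal reduces to the probabilistic bound $\mathbb{P}^N\{V(\Gamma^\star_N,\Lambda^\star_N)>\varepsilon_1\}\le\sum_{i=0}^{d-1}\binom{N}{i}\varepsilon_1^i(1-\varepsilon_1)^{N-i}$, which combined with the hypothesis~\eqref{eq:N1} gives the claim.

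The first step would be structural. Under Assumption~\ref{Assumption:Convexity}, for every realization of the scenarios the SP~\eqref{eq:SP_AB_given} is a convex program in the $d$-dimensional variable $(\Gamma,\Lambda)$ with $N$ sampled constraints; invoking Helly's theorem one shows that the optimizer is pinned down by a subset of at most $d$ of these constraints — the \emph{support constraints} — in the sense that deleting all remaining constraints leaves $(\Gamma^\star_N,\Lambda^\star_N)$ unchanged. The standing assumptions that~\eqref{eq:SP_AB_given} is always feasible with a \emph{unique} minimizer are precisely what make this notion well posed and the count $\le d$ rigorous (they rule out unbounded or ambiguous minimizers and tie-breaking pathologies). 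Hence for every realization there is an index set $I\subseteq\{1,\dots,N\}$ with $|I|\le d$ such that $(\Gamma^\star_N,\Lambda^\star_N)$ coincides with the minimizer $(\Gamma_I,\Lambda_I)$ of the SP built from the scenarios in $I$ alone.

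The second, and main, step is the probabilistic/combinatorial estimate. I would decompose the bad event over candidate support sets, $\{V(\Gamma^\star_N,\Lambda^\star_N)>\varepsilon_1\}\subseteq\bigcup_{|I|\le d}\big(\{(\Gamma_I,\Lambda_I)\text{ is optimal for all }N\text{ scenarios}\}\cap\{V(\Gamma_I,\Lambda_I)>\varepsilon_1\}\big)$, and observe that on the $I$-th event the $N-|I|$ scenarios \emph{outside} $I$ must all satisfy the constraint evaluated at $(\Gamma_I,\Lambda_I)$, while $(\Gamma_I,\Lambda_I)$ depends only on the scenarios \emph{inside} $I$. Conditioning on the latter and using independence of the scenarios bounds each such probability by $(1-\varepsilon_1)^{N-|I|}$ on the set $\{V(\Gamma_I,\Lambda_I)>\varepsilon_1\}$; exploiting exchangeability of the i.i.d. scenarios to collapse the $\binom{N}{j}$ identical terms for each cardinality $j=|I|$, together with a careful accounting of when a constraint is genuinely a support constraint, then sharpens the crude union bound $\binom{N}{d}(1-\varepsilon_1)^{N-d}$ into the exact binomial-tail expression above. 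This last sharpening — the bookkeeping of support constraints and the conditioning argument that produces the $\varepsilon_1^i$ factors rather than a single power of $(1-\varepsilon_1)$ — is the delicate part; everything else (measurability of $V$, the reduction of~\eqref{eq:MPC_problem_Matrix_form_short} to~\eqref{eq:MPC_problem_Matrix_form_known_theta}, and Helly's theorem) is routine. Plugging the resulting probability bound into~\eqref{eq:N1} gives $\mathbb{P}^N\{V(\Gamma^\star_N,\Lambda^\star_N)>\varepsilon_1\}\le\beta$, i.e. with probability at least $1-\beta$ the SP optimizer is feasible for the original chance-constrained problem.
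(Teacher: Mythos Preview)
Your sketch is correct and follows the standard Calafiore--Campi/Campi--Garatti argument for the scenario approach, including the reduction of~\eqref{eq:MPC_problem_Matrix_form_short} to~\eqref{eq:MPC_problem_Matrix_form_known_theta} when $\vartheta=\bar\vartheta$ is a point mass, the Helly-based bound on the number of support constraints, and the exchangeability/conditioning step that yields the binomial tail. There is nothing to compare against, however: the paper does not supply its own proof of this theorem --- it is quoted verbatim as a known result from~\cite{prandini2012randomized} (building on~\cite{calafiore2006scenario,campi2008exact,calafiore2010random}) and is used only as a black box in the proof of Theorem~\ref{Thm:Scenario_MPC_nested}.
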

\section{Uncertain Dynamics Scenario-based Stochastic MPC}\label{Sec:Uncertain_dynamics_Scenario_Stochastic_MPC}
In presence of uncertainty on the system dynamics, we aim for a problem formulation that can provide state and input constraint satisfaction with respect to both the epistemic and the stochastic uncertainty. In particular, we want to extend the finite sample result of the previous section to the case where the epistemic uncertainty on the system dynamics is described by a probability distribution on the parameters $\vartheta$, as is common for many probabilistic identification algorithms.
\subsection{Scenario program}
If we assume, for the time being, that we can obtain samples from the joint distribution $\mathbb{P}_{\vartheta,\eta}$, we can write the following deterministic SP:
\begin{equation}\label{eq:SP_eta_theta}
	\begin{aligned}
		\min_{\Gamma,\Lambda} \qquad & \frac{1}{N}\sum_i^{N} \ \tilde{J}\left(\Gamma,\Lambda,{\vartheta}^i,\boldsymbol{\eta}^i \right)\\
		\text{s.t.}\qquad & \tilde{f}\left(\Gamma,\Lambda,{\vartheta}^i,\boldsymbol{\eta}^i\right) \leq 0\quad \text{for}\ i=1,\dots,N.\\
	\end{aligned}
\end{equation}
We can now state the finite sample result that extends Theorem~\ref{Thm:Scenario_MPC_Basic} to include the robustness against the model uncertainty.
\begin{Theorem}\label{Thm:Scenario_MPC_nested}
	Fix the probability of violation $\varepsilon_1\in[0,1]$ and  $\varepsilon_2\in[0,1]$, the confidence parameter $\beta\in[0,1]$ and let $d$ be the number of decision variables. Further assume that the SP~\eqref{eq:SP_eta_theta} is always feasible and its optimal solution is unique. If $N$ is chosen such that
	\begin{equation}\label{eq:N2}
		\sum_{i=0}^{d-1} \begin{pmatrix} N\\ i\end{pmatrix} (\varepsilon_1 \varepsilon_2)^i (1-\varepsilon_1 \varepsilon_2)^{N-i} \leq \beta\ ,
	\end{equation}
	then, with probability at least $1-\beta$, the optimal solution to the sampled deterministic SP~\eqref{eq:SP_eta_theta} is a feasible solution for the chance constrained Problem~\eqref{eq:MPC_problem_Matrix_form_short}.
\end{Theorem}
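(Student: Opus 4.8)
The plan is to reduce Theorem~\ref{Thm:Scenario_MPC_nested} to Theorem~\ref{Thm:Scenario_MPC_Basic} applied to the joint distribution $\mathbb{P}_{\vartheta,\eta}$, followed by an elementary probabilistic post-processing step. First I would observe that the SP~\eqref{eq:SP_eta_theta} is structurally identical to~\eqref{eq:SP_AB_given}: the pair $(\vartheta^i,\boldsymbol{\eta}^i)$ plays the role of a single ``uncertainty sample'' drawn i.i.d.\ from $\mathbb{P}_{\vartheta,\eta}$, the decision variables are still $(\Gamma,\Lambda)$ with $d$ entries, and by Assumption~\ref{Assumption:Convexity} the constraint $\tilde f(\Gamma,\Lambda,\vartheta,\boldsymbol\eta)\le 0$ is convex in $(\Gamma,\Lambda)$ for almost every $(\vartheta,\boldsymbol\eta)$. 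Hence Theorem~\ref{Thm:Scenario_MPC_Basic} applies with the violation level set to $\varepsilon:=\varepsilon_1\varepsilon_2$: if $N$ satisfies~\eqref{eq:N2}, then with probability at least $1-\beta$ (over the draw of the $N$ scenarios) the optimizer $(\Gamma^\star,\Lambda^\star)$ of~\eqref{eq:SP_eta_theta} satisfies the \emph{joint} chance constraint
\[
\mathbb{P}_{\vartheta,\eta}\!\left(\tilde f(\Gamma^\star,\Lambda^\star,\vartheta,\boldsymbol\eta)\le 0\right)\ \ge\ 1-\varepsilon_1\varepsilon_2 .
\]

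Next I would convert this joint guarantee into the nested one appearing in~\eqref{eq:MPC_problem_Matrix_form_short}. Define the conditional violation probability $v(\vartheta):=\mathbb{P}_{\eta|\vartheta}\!\left(\tilde f(\Gamma^\star,\Lambda^\star,\vartheta,\boldsymbol\eta)>0\right)$, a measurable $[0,1]$-valued function of $\vartheta$. By the tower property, the joint guarantee above reads $\mathbb{E}_{\vartheta}[\,v(\vartheta)\,]\le\varepsilon_1\varepsilon_2$. Since $v\ge 0$, Markov's inequality gives, for $\varepsilon_1>0$,
\[
\mathbb{P}_{\vartheta}\!\left(v(\vartheta)>\varepsilon_1\right)\ \le\ \frac{\mathbb{E}_{\vartheta}[\,v(\vartheta)\,]}{\varepsilon_1}\ \le\ \frac{\varepsilon_1\varepsilon_2}{\varepsilon_1}\ =\ \varepsilon_2 ,
\]
that is, $\mathbb{P}_{\vartheta}\!\left(\mathbb{P}_{\eta|\vartheta}(\tilde f\le 0)\ge 1-\varepsilon_1\right)\ge 1-\varepsilon_2$, which is exactly the constraint of~\eqref{eq:MPC_problem_Matrix_form_short}. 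The degenerate case $\varepsilon_1=0$ is handled separately: then~\eqref{eq:N2} forces $\beta\ge 1$ and the statement is vacuous.

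I do not anticipate a serious technical obstacle; the only points requiring care are (i) verifying that all hypotheses of Theorem~\ref{Thm:Scenario_MPC_Basic} — feasibility, uniqueness of the optimizer, and convexity in $(\Gamma,\Lambda)$ — carry over to~\eqref{eq:SP_eta_theta} when the sample is the pair $(\vartheta^i,\boldsymbol\eta^i)$, which is ensured by the standing assumptions of the theorem together with Assumption~\ref{Assumption:Convexity}; and (ii) the measurability of $\vartheta\mapsto v(\vartheta)$, which follows from standard arguments once $\tilde f$ is a jointly measurable (Carath\'eodory) function. It is worth noting that the product split $\varepsilon_1\varepsilon_2$ is only a \emph{sufficient} budget allocation obtained from Markov's inequality and is generally conservative; sharpening it would require exploiting further structure of $\mathbb{P}_{\vartheta}$, which we deliberately do not assume.
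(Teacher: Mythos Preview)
Your proposal is correct and follows essentially the same route as the paper's own proof: apply Theorem~\ref{Thm:Scenario_MPC_Basic} to the joint uncertainty $(\vartheta,\boldsymbol\eta)$ with violation level $\varepsilon_1\varepsilon_2$, then use the tower property and Markov's inequality on the conditional violation probability to pass from the joint chance constraint to the nested one in~\eqref{eq:MPC_problem_Matrix_form_short}. Your additional remarks on the degenerate case $\varepsilon_1=0$, measurability of $v(\vartheta)$, and the conservatism induced by Markov's inequality are all in line with (or slight elaborations of) what the paper states.
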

% **********************
\begin{proof}
%Given that Algorithm~\ref{Algo:1} samples from the joint distribution $\mathbb{P}_{\vartheta,\eta}\ $,
We can apply Theorem~\ref{Thm:Scenario_MPC_Basic} to show that if $N$ satisfies~\eqref{eq:N2}, then with probability $1-\beta$ the optimal solution to the sampled deterministic SP~\eqref{eq:SP_eta_theta} is a feasible solution of the following chance constrained problem:
\begin{equation}\label{eq:MPC_problem_Matrix_form_short_joint}
	\begin{aligned}
		\min_{\Gamma,\Lambda} \qquad & 
		\mathbb{E}_{\vartheta,\eta}\left[\tilde{J}\left(\Gamma,\Lambda,{\vartheta},\boldsymbol{\eta}\right)\right]\\
		\text{s.t.}\qquad & \mathbb{P}_{\vartheta,\eta}\left(\tilde{f}\left(\Gamma,\Lambda,{\vartheta},\boldsymbol{\eta}\right) \leq 0 \right) \geq 1\!-\!\varepsilon_1\varepsilon_2
	\end{aligned}
\end{equation}
We can rewrite the chance constraint in~\eqref{eq:MPC_problem_Matrix_form_short_joint} as
\begin{align}
    & \mathbb{P}_{\vartheta,\eta}\left(\tilde{f}\left(\Gamma,\Lambda,{\vartheta},\boldsymbol{\eta}\right) > 0 \right) \leq \varepsilon_1 \varepsilon_2 \nonumber \\
    \Rightarrow\ & E_{\vartheta,\eta}\left(\mathbb{I}_{\tilde{f} > 0} \right) \leq \varepsilon_1 \varepsilon_2 \nonumber \\
    \Rightarrow\ & E_{\vartheta}\left( E_{\eta | \vartheta} \left( \mathbb{I}_{\tilde{f} > 0} \right)\right) \leq \varepsilon_1 \varepsilon_2 \nonumber \\
    \Rightarrow\ & \frac{1}{\varepsilon_1}E_{\vartheta}\left( E_{\eta | \vartheta} \left( \mathbb{I}_{\tilde{f} > 0} \right)\right) \leq \varepsilon_2 \label{eq:chance_constraint_2}
\end{align}
where $\mathbb{I}_{\tilde{f} > 0}$ is the indicator function of the event $\tilde{f}\left(\Gamma,\Lambda,{\vartheta},\boldsymbol{\eta}\right) > 0$, i.e., 
$$
\mathbb{I}_{\tilde{f} > 0} = \left\{ 
\begin{aligned}
    &1\qquad \tilde{f}\left(\Gamma,\Lambda,{\vartheta},\boldsymbol{\eta}\right) > 0\\
    &0\qquad \mbox{else} 
\end{aligned}
\right.
$$
We can apply Markov's inequality to the left-hand side of~\eqref{eq:chance_constraint_2}
\begin{equation*}
    \mathbb{P}_{\vartheta}\left( E_{\eta | \vartheta} \left( \mathbb{I}_{\tilde{f} > 0} \right)\geq \varepsilon_1 \right) \leq \frac{1}{\varepsilon_1}E_{\vartheta}\left( E_{\eta | \vartheta} \left( \mathbb{I}_{\tilde{f} > 0} \right)\right),
\end{equation*}
leading to
\begin{align*}
    &\mathbb{P}_{\vartheta}\left( E_{\eta | \vartheta} \left( \mathbb{I}_{\tilde{f} > 0} \right)\geq \varepsilon_1 \right)\leq \varepsilon_2\\
    \Rightarrow\ & \mathbb{P}_{\vartheta}\left( E_{\eta | \vartheta} \left( \mathbb{I}_{\tilde{f} > 0} \right) > \varepsilon_1 \right)\leq \varepsilon_2\\
    \Rightarrow\ & \mathbb{P}_{\vartheta}\left( \mathbb{P}_{\eta|\vartheta}\left(\tilde{f}\left(\Gamma,\Lambda,{\vartheta},\boldsymbol{\eta}\right) > 0 \right)> \varepsilon_1 \right)\leq \varepsilon_2\\
    \Rightarrow\ & \mathbb{P}_{\vartheta}\left( \mathbb{P}_{\eta|\vartheta}\left(\tilde{f}\left(\Gamma,\Lambda,{\vartheta},\boldsymbol{\eta}\right) \leq 0 \right)\geq 1-\varepsilon_1 \right)\geq 1-\varepsilon_2
\end{align*}
which shows that a feasible solution to~\eqref{eq:MPC_problem_Matrix_form_short_joint} is also a feasible solution to~\eqref{eq:MPC_problem_Matrix_form_short}.
\end{proof}
Note that, even for the so-called fully supported problems, where the number of supporting constraints equals the number of optimization variables and for which the scenario theory provides a tight bound on the sample size, the bound obtained through Theorem~\ref{Thm:Scenario_MPC_nested} is in general not tight due to the use of Markov's inequality. Furthermore, we can observe that, for a fixed number of scenarios $N$, Theorem~\ref{Thm:Scenario_MPC_Basic} guarantees that, with confidence $1-\beta$, the optimal solution to the SP~\eqref{eq:SP_eta_theta} is a feasible solution of the chance constrained Problem~\eqref{eq:MPC_problem_Matrix_form_short_joint} for any $\bar{\varepsilon}=\varepsilon_1\varepsilon_2$ satisfying~\eqref{eq:N2}.
Thanks to Theorem~\ref{Thm:Scenario_MPC_nested}, we can also see that such solution is feasible for~\eqref{eq:MPC_problem_Matrix_form_short}. This is true for any $\varepsilon_1 \in[0,1]$ and $\varepsilon_2 \in[0,1]$ such that $\varepsilon_1\varepsilon_2\geq\bar{\varepsilon}$. Similarly to the risk-confidence trade-off presented in~\cite{campi2008exact,campi2011sampling}, for fixed number of scenarios $N$ and confidence level $\beta$, it is still possible to adjust the trade-off between the robustness against the epistemic uncertainty on the model and the robustness against the stochastic disturbance.
\subsection{Scenarios construction}
We now turn to the question of how to obtain the samples from the joint distribution $\mathbb{P}_{\vartheta,\eta}$.

If the distributions $\mathbb{P}_{\vartheta}$ and $\mathbb{P}_{\eta}$ are available, we can directly generate samples from the joint distribution. If not, we can generate samples from historical data.

When identification or learning algorithms are employed to obtain a probabilistic description of the plant dynamics, historical data are used to produce distributions of models or directly samples from such distributions. 
An example of a procedure to generate samples of the dynamics based on bootstrapping is given in Section~\ref{Sec:Numerics}.

Having obtained samples of the system dynamics, one can generate the corresponding samples $\boldsymbol{\eta}$ using more historical data. To maintain the independency requirement necessary for Theorem~\ref{Thm:Scenario_MPC_nested}, we require access to $N$ i.i.d. historical state-input trajectory samples, each of length $T$ (the prediction horizon). We denote these by  $\{\boldsymbol{x}^i,\boldsymbol{u}^i\}_{i=1}^N$ with $\boldsymbol{x}^i=\{x^i_k\}_{k=0}^T$, $\boldsymbol{u}^i=\{u^i_k\}_{k=0}^{T-1}$. With these ingredients one can generate the required scenarios using the following algorithm.
%*************************
\begin{Algorithm}[Sequential sampling for Uncertain Dynamics Scenario-based MPC]{}\label{Algo:1}
	For $i=1,\dots,N$
	\begin{enumerate}
		\item Sample $\vartheta^i\sim \mathbb{P}_{\vartheta}$ from the parameters distribution;
		\item Obtain noise realizations of length $T$ from historical data conditioned on the sampled parameters ${\vartheta}^i$ as\\
		$\eta_k^i = x^i_{k+1}-\left(A^{\vartheta^i}x^i_k + B^{\vartheta^i}u^i_{k}\right)$, $k=0,\dots,T$.\\
		These are samples of $\boldsymbol{\eta}^i\sim \mathbb{P}_{\eta|\vartheta = \vartheta^i}$.
	\end{enumerate}
\end{Algorithm}
%*********************************
\subsection{Computational complexity}
Following~\cite{campi2008exact}, the number of scenarios $N$ required to satisfy~\eqref{eq:N2} can be explicitly upper bounded by
\begin{equation*}
\tilde{N} = \frac{2}{\varepsilon_1 \varepsilon_2} \left(d+\ln \frac{1}{\beta}\right)\ .
\end{equation*}
We can observe that $\tilde{N}$ grows logarithmically with  $\frac{1}{\beta}$ and linearly in $d$ and in $\frac{1}{\varepsilon_1 \varepsilon_2}$. For the disturbance-affine control policy~\eqref{eq:AffinePolicy} presented in Section~\ref{Sec:Problem_Formulation}, the number of optimization variables $d=mT + mn\frac{(T-1)T}{2}$ grows linearly with the state and input dimensions and quadratically with the prediction horizon length $T$. Even though the asymptotic growth is modest, practically applying Theorem~\ref{Thm:Scenario_MPC_nested} would require solving a large program and access to large number of historical system trajectories to generate the samples of $\boldsymbol{\eta}$ using Algorithm~\ref{Algo:1}. The number of required scenarios can be decreased by allowing larger probabilities of violations or by reducing the number of optimization variable, e.g. limiting $\Lambda$ to have non zero elements only on the sub-diagonal to achieve linear growth in the prediction horizon length $T$.
Nonetheless, empirically good performance can be obtained even for small number of scenarios, well below the minimum number required by the theoretical result of Theorem~\ref{Thm:Scenario_MPC_nested}, as seen in the numerical example below. 
\section{Numerical example}\label{Sec:Numerics}
We consider the system
\begin{equation}\label{eq:Dynamics_example}
	x_{k+1}=
	\begin{bmatrix} 
		0.9 & 0.15\\
		0.05 & 0.9\end{bmatrix} x_{k} + 
	\begin{bmatrix}
		0 \\
		1 \\
	\end{bmatrix} u_{k} +\eta_k,
\end{equation}
with additive disturbance $\eta_k \sim \mathcal{U}[-0.02;+0.02]$.
We assume that we have access to an identification dataset $\mathcal{D}_{Id}$ of size $L_{Id}=50$ comprising $\{ \boldsymbol{x},\boldsymbol{u},\boldsymbol{x}_+  \}^j$, $j=1,\dots,L_{Id},$ collected by applying a random input sequence \mbox{$u_k \sim \mathcal{U}[-0.5;+0.5]$}, $k=1,\dots,L_{Id}$, to the \textit{true} dynamical system starting from a random initial condition $x_0\sim\mathcal{U}[-0.5;+0.5]$. The small size of this dataset can generally represent a poorly designed identification experiment, with either a weakly exciting signal, correlated measurements or low signal-to-noise ratio. Using this data we generate $N$ samples of $\vartheta^i$ using the following bootstrapping algorithm.
\begin{Algorithm}[Empirical Bootstrap]\label{Algo:Bootstrap}$ $
	\begin{enumerate}
		\item Generate $N$ bootstrapped datasets $\mathcal{D}_{BS}^i, \ i=1,\dots,N$ of size $L_{Id}$ by sampling with replacement from $\mathcal{D}_{Id}$.
		\item Use the Least Squares method to obtain one point estimate of the parameter $\hat{\vartheta}_{BS}^i$ for each $\mathcal{D}_{BS}^i$, \mbox{$i=1,\dots,N$}.
	\end{enumerate}
\end{Algorithm}
We also assume availability of a dataset $\mathcal{D}_{Hist}$ comprising $N$ historical trajectory data of length $T$ to obtain samples of the disturbance using Algorithm~\ref{Algo:1}.

We will compare three methods:
\begin{enumerate}
	\item Uncertain Dynamics Scenario MPC (\textbf{UD-SMPC}): it relies on the solution of the deterministic SP~\eqref{eq:SP_eta_theta}, using the bootstrap Algorithm~\ref{Algo:Bootstrap} to generate samples of the uncertain dynamics from the given dataset $\mathcal{D}_{Id}$;
	\item Least Squares Scenario MPC (\textbf{LS-SMPC}): it relies on the solution of the deterministic SP~\eqref{eq:SP_AB_given}, considering as the deterministic model the one obtained from the identification dataset $\mathcal{D}_{Id}$ by Least Squares regression;
	\item Ground Truth Scenario MPC (\textbf{GT-SMPC})~\cite{prandini2012randomized}: it relies on the solution of the deterministic SP~\eqref{eq:SP_AB_given}, using the ground truth model~\eqref{eq:Dynamics}. This is unrealistic in practice and it is provided only for the sake of comparison as an ideal baseline. 
\end{enumerate}
For all the following simulations we consider initial condition $x_0 =[x_0^{(1)}, x_0^{(2)}]^\top = [ 0.6, 0 ]^\top$, MPC horizon $T = 5$ and cost function $J=\boldsymbol{x}_+^\top Q \boldsymbol{x}_+ + \boldsymbol{u}^\top R \boldsymbol{u}$ %constant reference $\boldsymbol{x}_ref = [ 0, 0 ]^\top$ and
with diagonal weighting matrices $R=0.1\, {I}_{T\times T}$ and $Q = {I}_{2T\times 2T}$. To obtain a simple comparison we will also consider only the constant constraint $f\left( \boldsymbol{x}_+, \boldsymbol{u}\right) = \max([0.5 - \boldsymbol{x}_+^{(1)}, - \boldsymbol{x}_+^{(2)}]^\top) \leq 0 $, i.e. we require the first and the second elements of the state to be greater than $0.5$ and $0$ respectively.
Theorem~\ref{Thm:Scenario_MPC_nested} requires SP~\eqref{eq:SP_eta_theta} to be always feasible; we deal with this issue by introducing a slack formulation for the chance constraint satisfaction $f\left( \boldsymbol{x}_+, \boldsymbol{u}\right) \leq \sigma$, with $\sigma$ added as a linear penalty in the cost function with a weight of $10^5$, to avoid violating the constraint whenever possible.
\subsection{Open-loop analysis}
To show how the inclusion of model uncertainty improves the reliability of the open-loop prediction we compare the three methods by computing the empirical violation probability over $1000$ realizations of the \textit{true} system \mbox{$T$-steps} trajectory.
We repeat this for $400$ Monte Carlo (MC) realizations of $\mathcal{D}_{Id}$ and $\mathcal{D}_{Hist}$. The number of scenarios is determined by setting $\beta = 10^{-5}$ and $d=26$, $25$ for the disturbance-affine policy~\eqref{eq:AffinePolicy} and one for the slack variable $\sigma$. Following~\eqref{eq:N2}, the number of scenarios needed to attain $\varepsilon_1=0.1$ and $\varepsilon_2=0.3$ for \textbf{UD-SMPC} is $1776$, whereas, following~\eqref{eq:N1} the number of scenarios needed to attain $\varepsilon_1=0.1$, for the other two methods is $523$.
Figure~\ref{fig:OpenLoopProbOfViolation} shows the cumulative distribution of the empirical probability of violation. For approximately $12\%$ of the datasets MC realizations, the \textbf{LS-SMPC} optimal policy results in a probability of violation greater than the specified $\varepsilon_1=0.1$. This issue is related to the neglected model uncertainty and cannot be dealt with by simply increasing the number of scenarios. \textbf{UD-SMPC} robustifies against the epistemic uncertainty by sampling the uncertain dynamics, resulting in lower violation probabilities.
\vspace{-0.1cm}
\begin{figure}[h]
\includegraphics[width=0.96\columnwidth]{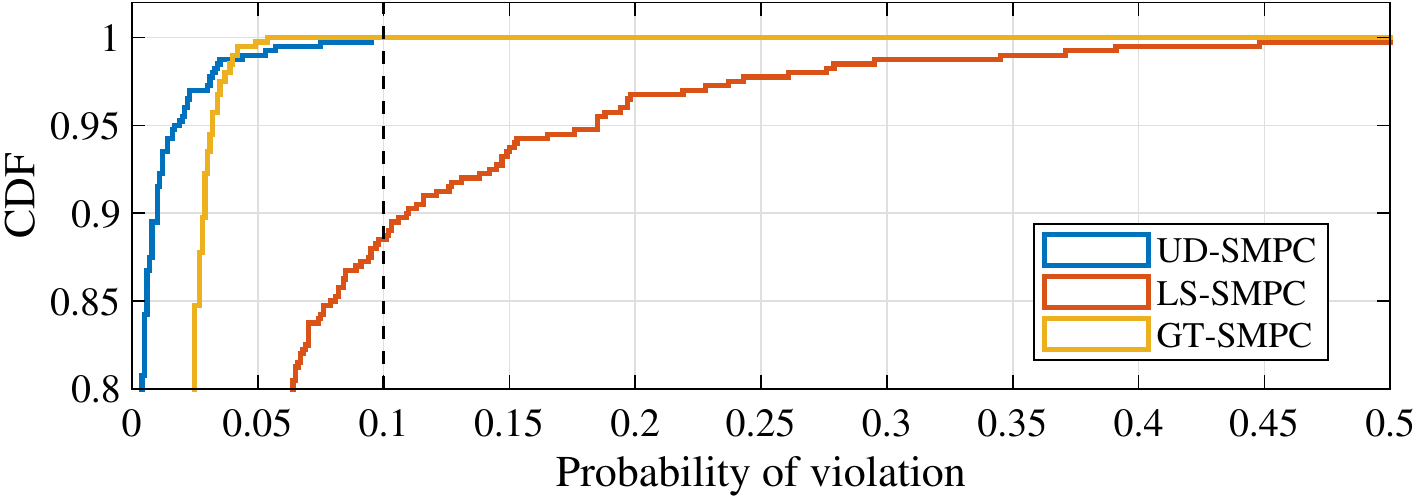}
\vspace{-0.1cm}
\caption{Cumulative empirical violation probability of the optimal feedback policies.}\label{fig:OpenLoopProbOfViolation}
\end{figure}
\vspace{-0.1cm}
%
%\textbf{Trade-off violation probability and cost? Compute the cost associated to the computed optimal policy: open-loop or ground truth realization?}
%
\subsection{Closed-loop analysis}
We compare the MPC $10$-steps closed-loop cost and number of violations of the three methods for equal number of scenarios $N=\{ 2^6, 2^7, 2^8, 2^9, 2^{10}, 2^{11} \}$. We repeat this for $400$ MC realizations of $\mathcal{D}_{Id}$ and $\mathcal{D}_{Hist}$. The analysis of the closed-loop performance is complicated by confounding factors such as:
\begin{enumerate}
	\item the receding horizon nature of MPC, that inherently provides a certain degree of robustness;
	\item the fact that problem we are trying to solve is not necessarily fully supported, hence our estimates of $N$ are not tight;
	\item the additional conservatism due to the use of Markov's inequality in the proof of Theorem~\ref{Thm:Scenario_MPC_nested}.
\end{enumerate}
In Figure~\ref{fig:ClosedLoopViolation} we can observe that the closed-loop violations decrease with larger number of scenarios. This in turns induces an increase in the closed-loop cost as shown in Figure~\ref{fig:ClosedLoopCostComparison1} and Figure~\ref{fig:ClosedLoopCostComparison2}. %The main difference comparing \textbf{UD-SMPC} and \textbf{LS-SMPC} is in the outlier distribution, which ultimately affects the expected closed-loop cost.
By comparing \textbf{UD-SMPC} and \textbf{LS-SMPC} performance, we can see that explicitly considering the uncertainty in the dynamics with \textbf{UD-SMPC} allows us to obtain a more robust controller, leading to fewer closed-loop violations and less outliers in the cost function. 
\begin{figure}[h]
\includegraphics[width=0.96\columnwidth]{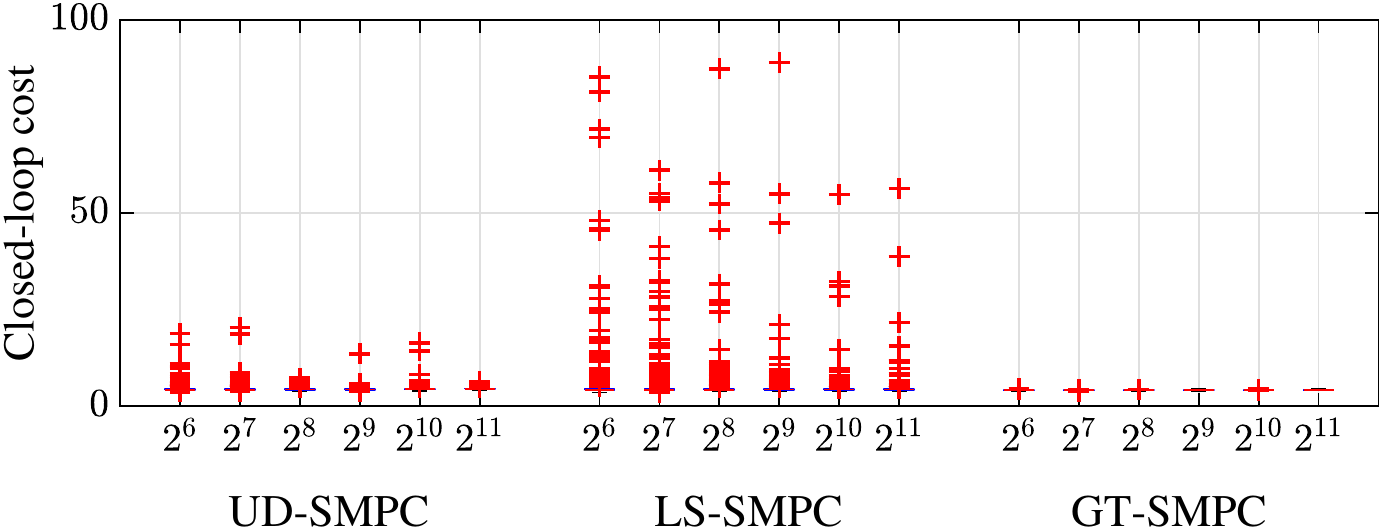}
\vspace{-0.1cm}
\caption{MPC $10$-steps closed-loop cost comparison for number of scenarios $N=\{ 2^6, 2^7, 2^8, 2^9, 2^{10}, 2^{11} \}$.}\label{fig:ClosedLoopCostComparison1}
\end{figure}
\vspace{-0.2cm}
\begin{figure}[h]
\includegraphics[width=0.96\columnwidth]{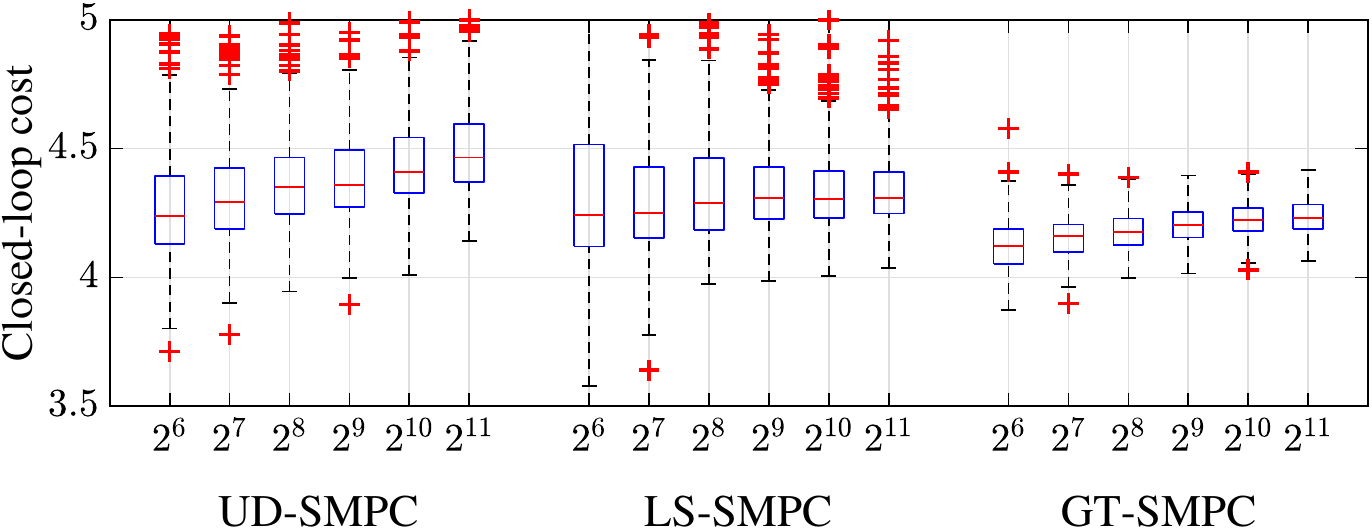}
\vspace{-0.1cm}
\caption{Close-up: Comparison of the MPC $10$-steps closed-loop cost.}\label{fig:ClosedLoopCostComparison2}
\end{figure}
\vspace{-0.2cm}
\begin{figure}[h]
	\includegraphics[width=0.96\columnwidth]{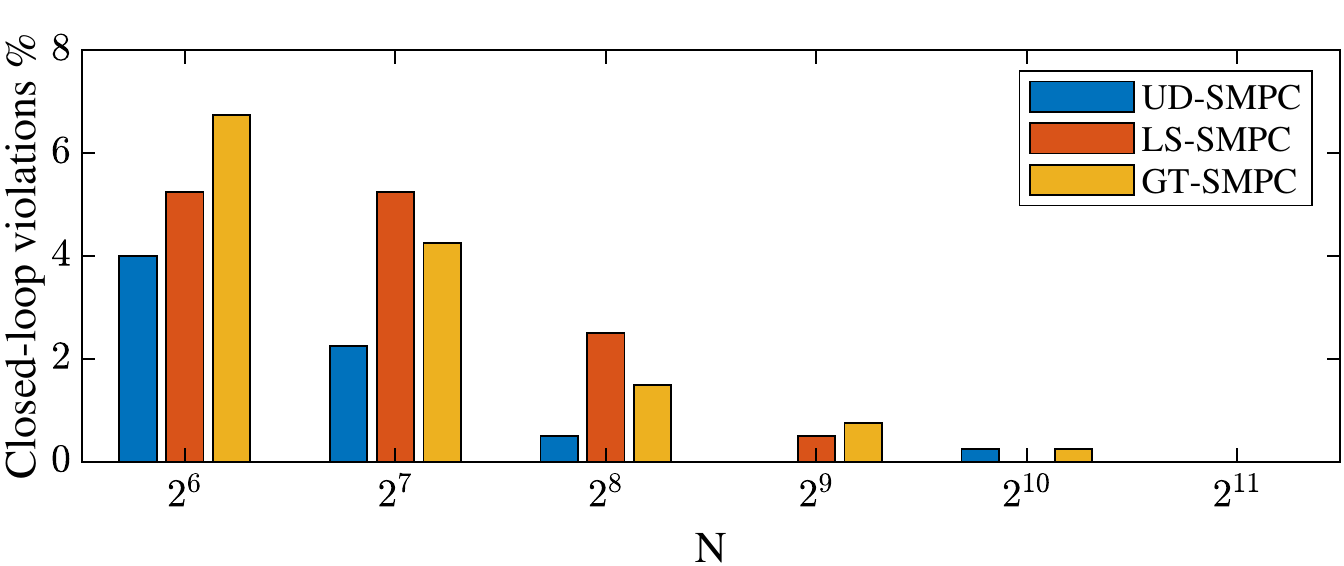}
	\vspace{-0.1cm}
	\caption{Comparison of the MPC $10$-steps closed-loop number of violations for $N=\{ 2^6, 2^7, 2^8, 2^9, 2^{10}, 2^{11} \}$.}\label{fig:ClosedLoopViolation}
\end{figure}
\vspace{-0.1cm}
%
%Remark on the number of scenarios: the number of scenarios to achieve the desired risk levels seems quite large. There are two important aspects to highlight: 1) The number of scenarios does not depend on the type of disturbance, 2) it can work with unbounded parameter and disturbance distributions, 3) Often good results can be achieved with a much smaller number of scenarios.
\section{Conclusions and future work}\label{Sec:Conclusion}
We developed a general scenario-based optimization framework for the solution of chance constrained stochastic MPC with uncertain dynamics. We extended previous work in scenario-based stochastic MPC providing a principled way of dealing with the epistemic uncertainty related to the system dynamics. Unlike stochastic and robust MPC approaches that typically require additional assumptions or over-approximations of the chance constraints, the proposed method allows for arbitrary distributions of the stochastic parameters. Moreover, the explicit distributions of the disturbances and of the model parametric uncertainties are not required, as long as samples are available.

Interesting future work directions include the extension of the proposed approach to an output feedback setup and the adaptation of the proposed scheme to an online learning dual formulation setting, in which at each step there is a trade-off between exploration and exploitation. 
One of the main aspects that remains to be addressed is the recursive (probabilistic) feasibility through the definition of safe probabilistic terminal sets. This could improve the applicability of the proposed method, making it suitable for a wide range of safety-critical learning-based applications.
\bibliographystyle{IEEEtran}
\bibliography{bib}
\end{document}